\newtheorem{prop}{Proposition}
\begin{document}
\title{ \LARGE \bf Stagewise Newton Method for Dynamic Game Control \\ with Imperfect State Observation}

\author{Armand Jordana$^{1}$, Bilal Hammoud$^{1}$, Justin Carpentier$^{2}$ and Ludovic Righetti$^{1,3}$
\thanks{$^{1}$Tandon School of Engineering, New York University, Brooklyn, NY
        {\tt\small aj2988@nyu.edu, bah436@nyu.edu, lr114@nyu.edu}}%
\thanks{$^{2}$Inria, Département d’informatique de l’ENS, \'Ecole normale supérieure, CNRS, PSL Research University, Paris, France
        {\tt\small justin.carpentier@inria.fr}}
        \thanks{$^{3}$Max-Planck Institute for Intelligent Systems, Tübingen, Germany}%
        \thanks{This work was in part supported by the European Union Horizon 2020 research and innovation program (grant agreement 780684), the National Science Foundation grants 1825993, 1932187, 1925079, 2026479, and the French government under management of Agence Nationale de la Recherche as part of the ”Investissements d’avenir” program, reference ANR-19-P3IA-0001 (PRAIRIE 3IA Institute), Louis Vuitton ENS Chair on Artificial Intelligence.
        }
}

\maketitle

\thispagestyle{empty}
\pagestyle{empty}

\begin{abstract}
In this letter, we study dynamic game optimal control with imperfect state observations and introduce an iterative method to find a local Nash equilibrium. 
The algorithm consists of an iterative procedure combining a backward recursion similar to minimax differential dynamic programming and a forward recursion resembling a risk-sensitive Kalman smoother. A coupling equation renders the resulting control dependent on the estimation. In the end, the algorithm is equivalent to a Newton step but has linear complexity in the time horizon length. Furthermore, a merit function and a line search procedure are introduced to  guarantee convergence of the iterative scheme.
The resulting controller reasons about uncertainty by planning for the worst case disturbances. Lastly, the low computational cost of the proposed algorithm makes it a promising method to do output-feedback model predictive control on complex systems at high frequency.
Numerical simulations on realistic robotic problems illustrate the risk-sensitive behavior of the resulting controller. 
\end{abstract}

\section{Introduction}

Doing output-feedback Model Predictive Control (MPC) while being robust to the estimator uncertainty is a notoriously difficult problem ~\cite{mayne2014model}. In optimal control, when the state measurement is partial and corrupted by noise, a common practice is to treat the estimation problem independently from the control and rely on the most likely outcome. However, in some scenarios, one might want to design a controller robust to the worst case estimation error. Such controllers can be obtained through dynamic game control~\cite{james1994risk, basar1991h}. As Mayne~\cite{mayne2014model} advocates, one meaningful way to properly take into account the measurement uncertainty is to use a minimax formulation linking control and estimation. 

This problem has been extensively studied in~\cite{copp2014nonlinear, copp2017simultaneous}, which shows that a specific dynamic game formulation leads to MPC approaches with bounded state trajectories and provides an explicit characterization of these bounds. However, the minimax problem was solved with an interior point method without taking into account the specific structure of the problem and the sparsity induced by time. In this work, we derive an explicit iterative solution that fully exploits sparsity, resulting in an algorithm that linearly scales with the time horizon length and which can be easily warm-started for use in MPC schemes \cite{mastalli20crocoddyl}.

In the linear dynamics and quadratic cost case, Jacobson~\cite{jacobson1973optimal} showed that dynamic game control is equivalent to risk sensitive control and derived a closed form solution. Later, Whittle~\cite{whittle1981risk} extended the results to the linear quadratic case with imperfect state observations. In~\cite{hammoud2021irisc}, a first iterative version of Whittle's solution was introduced to tackle the nonlinear risk sensitive problem with imperfect observations. However, the stochastic nature of the problem hindered the development of theoretical guarantees. Although, dynamic game and risk sensitive control are equivalent in the linear quadratic case~\cite{jacobson1973optimal}, this is no longer the case in the nonlinear setting~\cite{campi1996nonlinear}. Nonetheless, dynamic game control is tightly connected to robust and risk sensitive control. In~\cite{james1994risk, campi1996nonlinear}, James and Campi showed that dynamic game control can be interpreted as the limit case of risk sensitive control when noise tends to zero. Recently, Ba\c{s}ar~\cite{bacsar2021robust} presented a detailed overview of the connections between both problems in continuous time. Additionally, Ba\c{s}ar and Bernhard~\cite{basar1991h, bernhard1994minimax} established the connections between dynamic game control and $\mbox{H}^{\infty}$-Optimal Control both in the perfect and imperfect state information case. %

For nonlinear systems, estimating a state trajectory corresponding to some given measurements is usually intractable analytically. A common approach is to model the noise as Gaussian and to maximize the Maximum A Posteriori (MAP)~\cite{cox1964estimation}. 
If the dynamics are affine, then the problem can be solved analytically with the so-called Rauch–Tung–Striebel (RTS) smoother \cite{rauch1965maximum}. The RTS smoother is made of a forward recursion which resembles the Kalman Filter (KF) and a backward recursion. In the nonlinear case, iterative schemes are usually used. A popular choice is the iterative Kalman smoother which is equivalent to a Gauss-Newton method on the MAP \cite{bell1994iterated}. The estimation part of our proposed solution resembles a risk sensitive version of this smoother.

In optimal control or dynamic game control with perfect state information,
various numerical optimization algorithms have been developed to iteratively find solutions in the nonlinear case.
In optimal control, the most analogous to our work are Differential Dynamic Programming (DDP)~\cite{murray1984differential} and the stagewise implementation of the Newton's method~\cite{dunn1989efficient}. The stagewise Newton method is an exact implementation of Newton's method that exploits the specific structure of the Hessian matrix in order to scale linearly with the time horizon. DDP is an iterative algorithm that takes an update step on the control input by applying dynamic programming on a quadratic approximation of the value function. In~\cite{murray1984differential}, Murray showed that DDP is very similar to a Newton step and inherits its convergence properties. For dynamic game control with perfect state information, the seminal work from~\cite{mukai2000game, morimoto2003minimax} introduced minimax DDP showing that DDP could be extended to zero-sum two players games. Recently, \cite{di2021newton} further extended the concepts of stagewise Newton method and DDP to nonzero-sum games with an arbitrary number of players in the full information case. 

Despite having been widely studied theoretically, to the best of our knowledge, dynamic game control with imperfect state observations has not been approached from a numerical optimization point of view. In this work, we consider the general problem of dynamic game control with imperfect state observation and present a numerically efficient provably convergent algorithm to solve it. The solution presented generalizes commonly used techniques such as the extended Kalman smoother and Differential Dynamic Programming. The proposed solution fully exploits the sparsity of the problem and scales linearly with time, making it a promising method for model predictive control.

\textbf{Contributions.} In this paper, we derive a stagewise Newton method for dynamic game control with imperfect state observations. The proposed approach scales linearly in the time horizon. Furthermore, a merit function and a line search procedure are introduced to guarantee convergence. To our knowledge, this is the first iterative procedure with convergence guarantee for the dynamic game control problem with imperfect state observation. The proposed solver couples estimation and control by merging an iterative optimal control algorithm similar to minimax DDP and an iterative risk sensitive Kalman smoother. We illustrate the behavior of the resulting controller on two realistic robotic problems.

\textbf{Notations.}
The derivative of a function $f$ with respect to a vector $v$ is denoted by $f^v$, similarly for second order derivatives with respect to vectors $u,v$ is denoted as $f^{uv}$. If $(v_i)_{i\in \mathbb{N}}$ is a sequence of vectors, then $v_{k:t}$ is a vector concatenating $v_k \dots v_t$. $\mathbf{1}_{x \in A}$ is the indicator function which equals $1$ if $x \in A$ and $0$ otherwise. $I_n$ denotes the identity matrix of size $n$ by $n$.

\section{Problem Statement}

Similar to~\cite{copp2014nonlinear, copp2017simultaneous}, this work studies a special class of nonlinear dynamic games with imperfect state observation~\cite{james1994risk}.
Given a history of measurements  $y_{1:t}$, a history of control inputs, $u_{0:t-1}$ and a prior on the initial state~$\hat x_0 $, we aim to find a control sequence $u_{t:T-1}$ that minimizes a given cost~$\ell$ while an opposing player aims to find the disturbances $(w_{0:T}, \gamma_{1:t})$ that maximize this cost $\ell$ minus a weighed norm of the disturbances. Such a problem is formally written as:
\begin{align}\label{eq:main_problem}
&  \min_{u_{t:T-1}} \max_{w_{0:T}} \max_{\gamma_{1:t}}  \sum_{j=0}^{T-1} \ell_j(x_j, u_j) + \ell_T(x_T) \\
    & - \frac{1}{2\mu} \left( \omega_{0}^T  P^{-1} \omega_{0}  +  \sum_{j=1}^{t} \gamma_j^T  R_j^{-1} \gamma_j  + \sum_{j=1}^{T}   w_{j} ^ T Q_j^{-1} w_{j}   \right)\nonumber
\end{align}
\vspace{-0.6cm}
\begin{subequations}\label{eq:dynamics}
\begin{align} 
\mbox{subject to} \quad  x_{0} &= \hat{x}_0 + w_{0},  & \\
x_{j+1} &= f_j(x_j, u_j) + w_{j+1},  &   0 \leq j < T, \\
y_{j} &= h_j(x_j)+ \gamma_{j},  &  1\leq j \leq t.
\end{align}
\end{subequations}

\noindent where $\mu > 0$. $x_j$ is the state, $\omega_j$ the process disturbance, $\gamma_j$ the measurement disturbance, $T$ the time horizon, $t$ the current time. The transition model $f_j$, the measurement model $h_j$ and the cost $\ell_j$ are assumed to be $\mathcal{C}^2$. The measurement uncertainty $R_j$, the process uncertainty $Q_j$ and the initial state uncertainty $P$ are positive-definite matrices.

Interestingly, this problem encompass various formulations of control and estimation. If $t=0$ and if $w_0$ is fixed to zero, we recover dynamic game control with perfect state information. Additionally, if $t=0$, in the limit where $\mu$ tends to zero, we find the generic optimal control formulation \cite{campi1996nonlinear}. And lastly, if $t=T$ and if we consider all the cost $\ell_j$ to be null,  then, \eqref{eq:main_problem} is equivalent to maximizing the MAP.

\section{Method}
\subsection{First order conditions for a Nash equilibrium}
The main challenge in the problem formulation \eqref{eq:main_problem} is the equality constraint maintaining the dynamic feasibility. A popular approach \cite{morimoto2003minimax} is to derive a DDP-like algorithm by sequentially taking quadratic approximations of the value function recursion. However, a stagewise Newton step can readily be derived. One of the key features of the dynamic game \eqref{eq:main_problem} is that by changing the decision variable of the opposing player, one can transform the problem into an unconstrained one~\cite{james1994risk}. Indeed, we can use the equality constraints of Eq. \eqref{eq:dynamics} to replace disturbance maximization into a maximization over the state sequence. Then the problem loses its equality constraints and can be formulated as the search of the saddle point of
\begin{align}\label{eq:unconstrained_main_problem}
    J(&x_{0:T}, u_{t:T-1}) = \sum_{j=0}^{T-1} \ell_j(x_j, u_j) + \ell_T(x_T) \\
     & - \frac{1}{2\mu}  (x_0 - \hat{x}_0)^T  P^{-1} (x_0 - \hat{x}_0)   \nonumber \\
    & - \frac{1}{2\mu} \sum_{j=1}^{t} (y_j - h_j(x_j) )^T  R_j^{-1} (y_j - h_j(x_j) )   \nonumber \\
    & - \frac{1}{2\mu}  \sum_{j=0}^{T-1}    (x_{j+1} - f_j(x_j, u_j)) ^ T Q_{j+1}^{-1} (x_{j+1} - f_j(x_j, u_j)).   \nonumber
\end{align}
However, without convexity and concavity assumptions, we cannot aim at finding global solutions of the minimax problem. Hence, we restrict our attention to local Nash equilibrium, namely a point $(x_{0:T}^{\star}, u_{t:T-1}^{\star})$ such that there exits $\delta > 0$ such that for any $(x_{0:T} , u_{t:T-1} )$ satisfying
$|| x_{0:T} - x^{\star}_{0:T}  || < \delta $ and $|| u_{t:T-1} - u^{\star}_{t:T-1}  || < \delta $, we have
 \begin{align}
   J(x_{0:T}, u^{\star}_{t:T-1}) \leq  J(x^{\star}_{0:T}, u^{\star}_{t:T-1}) \leq J(x^{\star}_{0:T}, u_{t:T-1}). \label{eq:saddlepoint}
\end{align}
A standard approach to this problem is to search for a stationary point~\cite{bacsar1998dynamic}:
\begin{align}
    \begin{pmatrix}
    \frac{ \partial J}{ \partial x_{0:T}}(x^{\star}_{0:T}, u^{\star}_{t:T-1})\\[1mm]
    \frac{ \partial J}{ \partial u_{t:T-1}}(x^{\star}_{0:T}, u^{\star}_{t:T-1})
    \end{pmatrix} = 0
\end{align}
Interestingly, the change of decision variable in Eq. \ref{eq:unconstrained_main_problem} turned the problem into an unconstrained one but made no assumption on the structure of the cost. The only required assumption is that each disturbance is in a one to one map with the state at each time step.
\subsection{About the Linear Quadratic case}
In the linear quadratic case, Whittle has shown that under some conditions, the saddle point of \eqref{eq:saddlepoint} is global and can be computed analytically. More precisely, the order of the minimization and maximization in \eqref{eq:main_problem} can be interchanged, namely the lower value and upper value of the game are equal. Despite this result, one of the difficulties of the problem \eqref{eq:main_problem} is that it links estimation and control. One the major contribution of Whittle is the introduction of the notion of past stress and future stress showing that the KF and LQR principles can still be applied. In other words, the problem can be solved by performing a backward recursion on the controls and future states and a forward recursion on the past states. The future stress recursion can be interpreted as a value function recursion similar to LQR, while the past stress can be interpreted as a rollout of KF. Here, we use these two principles to efficiently solve iteratively the nonlinear case.

\subsection{A stagewise Newton's method}

In this section, a stagewise formulation of the Newton method to find a stationary point of $J$ is introduced. While a naive implementation of the Newton method would yield a complexity of $O (T^3) $, it is shown that the special structure of the Hessian induced by time can be exploited in order to obtain a linear complexity in time $O (T)$. In the perfect state observation case, \cite{dunn1989efficient} and \cite{di2021newton} derived a stagewise Newton method with a backward recursion on the controls. However, with imperfect state observation, it is no longer clear how to do this with only one recursion. Instead, we show that the principles introduced by Whittle can be applied.

To ensure that the proposed method is well defined and to guarantee convergence, we assume that the cost satisfies smoothness and non-degeneracy conditions required for the convergence of Newton's method \cite{nocedal1999numerical}. 
As the cost~\eqref{eq:unconstrained_main_problem} is unconstrained, the gradients and Hessian of the cost can readily be computed. At iteration $i$, given a guess $x_0^i,  x_1^i \dots, x_T^i, u_t^i \dots, u_{T-1}^i$, also referred as the nominal trajectory, the Newton step, denoted by $p$, satisfies
\begin{align} \label{Newton_step}
\hspace{-0.21cm} \begin{pmatrix}
     \dfrac{ \partial^2 J}{ \partial x_{0:T} \partial x_{0:T}} &  \dfrac{ \partial^2 J}{ \partial x_{0:T} \partial u_{t:T-1}}\\[3mm]
      \dfrac{ \partial^2 J}{ \partial u_{t:T-1} \partial x_{0:T}} &  \dfrac{ \partial^2 J}{ \partial u_{t:T-1} \partial u_{t:T-1}}
    \end{pmatrix} 
p = \begin{pmatrix}
     \dfrac{ \partial J}{ \partial x_{0:T}}\\[3mm]
     \dfrac{ \partial J}{ \partial u_{t:T-1}}
    \end{pmatrix}
\end{align}
\noindent
Here, $p =  \begin{pmatrix}
     p_{x_{0:T}}^T &
     p_{u_{t:T-1}}^T
    \end{pmatrix}^T$ where ${p_{x_{0:T}} \in \mathbb{R}^{(T+1) n_x}}$ is a stack of vectors $p_{x_k}\in \mathbb{R}^{n_x}$ with~$n_x$ being the dimension of the state space. Similarly, ${p_{u_{t:T-1}} \in \mathbb{R}^{(T-t) n_u}}$ is a stack of vectors $p_{u_k}\in \mathbb{R}^{n_u}$ with~$n_u$ being the dimension of the control space. 
To simplify the notations, we define an augmented Hessian of the cost that contains the second order derivatives of the dynamics for all~$k< T$.
\begin{align}
\bar{\ell}^{xx}_k  &= \ell^{xx}_k   +  \mu^{-1}  {w_{k+1}^i}^T Q_{k+1}^{-1} f^{xx}_k + \mu^{-1} \mathbf 1_{1\leq k \leq t} {\gamma_{k}^i}^T R_{k}^{-1} h^{xx}_k ,\nonumber\\
  \bar{\ell}^{xu}_k &= \bar{\ell}_k^{{ux}^T}= \ell^{xu}_k   +  \mu^{-1}  {w_{k+1}^i}^T Q_{k+1}^{-1} f^{xu}_k , \nonumber\\ 
\bar{\ell}^{uu}_k  &= \ell^{uu}_k   +  \mu^{-1} {w^i_{k+1}}^T Q_{k+1}^{-1} f^{uu}_k,
\end{align}
\noindent where the derivatives are evaluated at the current guess and where ${w_{k+1}^{i} \coloneqq  x_{k+1}^{i} - f_k(x_{k}^{i}, u_{k}^{i})}$ and  ${ \gamma_{k}^{i} \coloneqq  y_{k} - h_k(x_{k}^{i}) }$. Here, the second order derivatives of the dynamics are tensors. The exact definition of the tensor indexing and the tensor product is provided in the supplementary material~\cite{appendix}.

The next three propositions are analogous to the principles introduced by Whittle: the past stress recursion, the future stress recursion and the coupling of the past and future stress recursions. The first proposition, analogous to the future stress recursion, expresses every future state and control update steps as a function of $p_{x_t}$.
\begin{prop}[Future stress]
{In Equation \eqref{Newton_step}, the last ${(T-t)(n_x+n_u)}$ rows are equivalent to: 
\begin{align}
\forall k \geq t, \quad p_{u_k} =  G_k p_{x_k} + g_k& \label{control_forward_pass} \\
   p_{x_{k+1}} = \left( I  - \mu Q_{k+1} V_{k+1} \right)^{-1}  (&f^x_k p_{x_k} + f^u_k p_{u_k} \nonumber\\
  & + \mu Q_{k+1} v_{k+1}  - w^i_{k+1}  ) \nonumber
\end{align}
where $V_k$ and  $ v_k$ are solutions of the backward recursion:
\begin{align}
\Gamma_{k+1} &= I - \mu  V_{k+1} Q_{k+1} \label{control_backward_pass} \\
    Q_{uu} &=  \bar \ell^{uu}_k +  {f^u_k}^T \Gamma_{k+1}^{-1} V_{k+1} f^u_k \nonumber \\
    Q_{ux} &= \bar \ell^{ux}_k+ {f^u_k}^T \Gamma_{k+1}^{-1} V_{k+1} f^x_k \nonumber\\
 Q_{u} &=   \ell^{u}_k + {f^u_k}^T \Gamma_{k+1}^{-1} \left( v_{k+1} -  V_{k+1} w^i_{k+1} \right) \nonumber\\
    G_k &= - Q_{uu}^{-1} Q_{ux} \nonumber\\
    g_k &= - Q_{uu}^{-1} Q_u\nonumber\\
    V_k &=\bar \ell^{xx}_k + {f^x_k}^T \Gamma_{k+1}^{-1} V_{k+1} f^x_k + Q_{ux}^T G_k \nonumber \\
    v_k &= \ell^{x}_k  + {f^x_k}^T \Gamma_{k+1}^{-1} \left(v_{k+1} -  V_{k+1} w^i_{k+1} \right) + Q_{ux}^T g_k  \nonumber
\end{align}
with the terminal condition
\begin{align}
    V_T &=  \ell^{xx}_T, & v_T =  \ell^{x}_T.
\end{align}
}
\end{prop}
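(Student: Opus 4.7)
The plan is to solve the Newton system \eqref{Newton_step} by backward block-elimination, which amounts to dynamic programming on the quadratic model of $J$. Because every term in \eqref{eq:unconstrained_main_problem} couples at most the variables $(x_k, u_k, x_{k+1})$, the Hessian $\nabla^2 J$ is block-tridiagonal in time; the $\bar\ell_k$ notation is defined precisely to absorb the second-order derivatives of $f_k$ and $h_k$ evaluated against the current residuals $w_{k+1}^i$ and $\gamma_k^i$, so that together with the stage cost they encode the diagonal blocks of $\nabla^2 J$ at time $k$. The dynamics penalty additionally couples $(x_k, u_k)$ with $x_{k+1}$ through $\pm\mu^{-1} Q_{k+1}^{-1}$-weighted blocks involving $f^x_k$ and $f^u_k$.

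I would then proceed by backward induction on $k = T, T-1, \dots, t$ with the ansatz: after eliminating the Newton equations associated with variables at times strictly greater than $k$, the row at $p_{x_k}$ contributes $V_k p_{x_k} + v_k$ to what remains, with $V_k, v_k$ satisfying the stated recursion, while the eliminated variables are already expressed as the affine functions of $p_{x_k}$ in the proposition. The base case $k=T$ is immediate: only the terminal cost contributes to the $x_T$ row from time $T$, giving $V_T=\ell^{xx}_T$ and $v_T=\ell^x_T$. The concave $-\mu^{-1}Q_T^{-1}$ contribution to $\nabla^2_{x_T,x_T}J$ from the $k=T-1$ dynamics penalty is deliberately excluded from $V_T$; it is picked up in the next step through the $\Gamma_T$ factor.

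For the inductive step at $k$, the Newton rows for $p_{u_k}$ and $p_{x_{k+1}}$ couple $(p_{x_k}, p_{u_k}, p_{x_{k+1}})$, with the dependence on times beyond $k+1$ entirely captured by the inductive term $V_{k+1} p_{x_{k+1}} + v_{k+1}$ added to the $x_{k+1}$ row. Solving that row for $p_{x_{k+1}}$ requires inverting the combined self-block $V_{k+1} - \mu^{-1} Q_{k+1}^{-1}$; premultiplying by $-\mu Q_{k+1}$ turns this into $I - \mu Q_{k+1} V_{k+1}$, yielding the displayed formula for $p_{x_{k+1}}$ and the $\Gamma_{k+1}$ factor reused throughout the backward pass. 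Substituting $p_{x_{k+1}}$ into the $u_k$ row produces an affine equation in $(p_{x_k}, p_{u_k})$ whose coefficients match $Q_{uu}, Q_{ux}, Q_u$; solving for $p_{u_k}$ delivers $G_k, g_k$. Carrying $p_{u_k}$ and $p_{x_{k+1}}$ back into the $x_k$ row (which is deferred to step $k-1$, or, for $k=t$, to the coupling with the past-stress recursion) yields the claimed updates of $V_k, v_k$.

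The main obstacle is keeping signs and Schur-complement algebra straight, especially verifying that $\Gamma_{k+1}$ emerges exactly as $I - \mu V_{k+1} Q_{k+1}$ rather than its transpose, and that the affine residual collapses into $\mu Q_{k+1} v_{k+1} - w_{k+1}^i$ in the $p_{x_{k+1}}$ formula. This is mechanical once the block-tridiagonal structure and the ansatz are in place, and the non-degeneracy assumption on the cost ensures that all required inverses ($Q_{uu}$ and $I-\mu Q_{k+1}V_{k+1}$) exist throughout the recursion.
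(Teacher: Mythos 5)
Your proposal is correct and follows essentially the same route as the paper's own proof: compute the gradient and Hessian of the unconstrained objective \eqref{eq:unconstrained_main_problem}, exploit its block-tridiagonal structure with the $\bar\ell_k$ terms absorbing the tensor contractions, and perform a backward induction from $T$ to $t$ that eliminates $(p_{x_{k+1}},p_{u_k})$ and defers the $-\mu^{-1}Q_{k+1}^{-1}$ self-block so that it reappears through $\Gamma_{k+1}$. The key manipulations you flag (the Schur-complement elimination, the emergence of $\left(I-\mu Q_{k+1}V_{k+1}\right)^{-1}$, and deferring the $x_k$ row to step $k-1$ or to the coupling at $k=t$) are exactly the ones used in the paper's induction, so the sketch is sound.
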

In those equations, $\mu$ intervenes only in $\Gamma_k$ and the augmented terms of the cost. Interestingly, $\Gamma^{-1}_k$ shifts, at each time step, the value function terms  $V_k$ and  $ v_k$. 
Then, the second proposition, analogous to the past stress recursion, expresses every past state update steps as a function of $p_{x_t}$.
\begin{prop}[Past stress]
In Equation \eqref{Newton_step}, if $t\geq1$, the first ${(t-1) n_x}$ rows are equivalent to: $ \forall k=0, \dots, t-1, $
\begin{align}
p_{x_k} &= E_{k+1}^{-1} \left(   {f^x_k}^T Q_{k+1}^{-1} ( w_{k+1}^i + p_{x_{k+1}} ) +  P_k^{-1}  \hat{\mu}_k  + \mu l^{x}_k \right)\label{estimation_backward_pass}
\end{align}
where $P_k$ and  $ \hat{\mu}_k$ are solution of the forward recursion:
\begin{align}
  E_{k+1} &=  P_k^{-1} +  {f^x_k}^T Q_{k+1}^{-1} f^x_k -  \mu  \bar{l}^{xx}_k \nonumber\\
\bar{P}_{k+1}  &= Q_{k+1} + f^x_k (P_k^{-1}  - \mu \bar{\ell}^{xx}_k)^{-1}  {f^x_k}^T \nonumber\\
    K_{k+1} &= \bar{P}_{k+1}   h_{k+1}^{x^T}  (R_{k+1}  + h^x_{k+1}  \bar{P}_{k+1}   h_{k+1}^{x^T} )^{-1} \nonumber\\
   P_{k+1} &= (I - K_{k+1} h^x_{k+1} )   \bar{P}_{k+1} \nonumber\\
   \hat{\mu}_{k+1}  &=   (I - K_{k+1} h^x_{k+1} ) ( f^x_k \hat{\mu}_k - w_{k+1}^i)  + K_{k+1} \gamma_{k+1}^i \nonumber \\
   &+  \mu P_{k+1} Q_{k+1}^{-1} f^x_k E_{k+1}^{-1} (  \bar{\ell}^{xx}_k \hat{\mu}_k +  \ell^{x}_k ) \label{estimation_forward_pass}
\end{align}
with the initialization
\begin{align}
    P_0 &= P, & \hat{\mu}_0 = \hat x_0 - x_0^i.
\end{align}
\end{prop}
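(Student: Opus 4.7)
The approach is to verify by block-Gaussian elimination that the forward pass~\eqref{estimation_forward_pass} combined with the backward formula~\eqref{estimation_backward_pass} solves exactly those rows of~\eqref{Newton_step} that correspond to the past state variables $x_0,\dots,x_{t-1}$. Since $u_{0:t-1}$ are inputs and not decision variables, these rows involve only the state--state Hessian blocks. As a first step, I would differentiate $J$ explicitly and list, for each $k<t$, the gradient $\partial J/\partial x_k$ together with the three nonzero Hessian blocks $\partial^2 J/\partial x_k\partial x_{k-1}$, $\partial^2 J/\partial x_k^2$, and $\partial^2 J/\partial x_k\partial x_{k+1}$. The dynamics constraints $w_k$ and $w_{k+1}$ contribute the terms involving $Q_k^{-1}$, $Q_{k+1}^{-1}$, $f_{k-1}^x$, and $f_k^x$; the measurement contributes $\mu^{-1} h_k^{xT} R_k^{-1} h_k^x$ together with the $h_k^{xx}$ piece already absorbed into $\bar\ell_k^{xx}$; and the running cost contributes $\ell_k^x$ and $\ell_k^{xx}$. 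The resulting linear system in $(p_{x_0},\dots,p_{x_t})$ is block tridiagonal in the state index.

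I would then proceed by induction on $k$ from $0$ upward. The inductive invariant is that, after eliminating $p_{x_0},\dots,p_{x_{k-1}}$ from the system, the $k$-th block equation can be rearranged into
\begin{equation*}
E_{k+1}\,p_{x_k} = {f_k^x}^T Q_{k+1}^{-1}\bigl(w_{k+1}^i + p_{x_{k+1}}\bigr) + P_k^{-1}\hat{\mu}_k + \mu\,\ell_k^x,
\end{equation*}
which, after left-multiplication by $E_{k+1}^{-1}$, is exactly~\eqref{estimation_backward_pass}. The base case $k=0$ follows directly from the explicit form of $\partial J/\partial x_0$ together with the initialization $P_0=P$ and $\hat{\mu}_0 = \hat x_0 - x_0^i$. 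For the inductive step I would solve the invariant at level $k$ for $p_{x_k}$ as an affine function of $p_{x_{k+1}}$, substitute into the Newton equation at level $k+1$, and match separately the coefficients of $p_{x_{k+1}}$, of $p_{x_{k+2}}$, and of the constant term with the claimed expressions for $P_{k+1}$, $E_{k+2}$, and $\hat{\mu}_{k+1}$.

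The hard part will be recognizing the Kalman-filter structure in the induced updates. Taking the Schur complement of $E_{k+1}$ inside the row at index $k+1$ produces a predicted-covariance expression $\bar P_{k+1} = Q_{k+1} + f_k^x(P_k^{-1}-\mu\bar\ell_k^{xx})^{-1}{f_k^x}^T$, and combining it with the measurement contribution $\mu^{-1} h_{k+1}^{xT} R_{k+1}^{-1} h_{k+1}^x$ via the Woodbury identity yields $P_{k+1} = (I - K_{k+1} h_{k+1}^x)\bar P_{k+1}$ with the stated Kalman gain $K_{k+1}$. The nonstandard extra term $\mu P_{k+1} Q_{k+1}^{-1} f_k^x E_{k+1}^{-1}(\bar\ell_k^{xx}\hat{\mu}_k + \ell_k^x)$ in the $\hat{\mu}_{k+1}$ update emerges from carrying the running-cost gradient $\ell_k^x$ and its Hessian $\bar\ell_k^{xx}$ through the elimination; it is precisely the piece that distinguishes the past-stress recursion from a standard extended Kalman smoother, and it vanishes when the running costs $\ell_{0:t-1}$ are zero. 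Careful tracking of signs, $\mu$-factors, and how the second-order derivatives of $f_k$ and $h_k$ are absorbed into $\bar\ell_k^{xx}$ is the most delicate bookkeeping.
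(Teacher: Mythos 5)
Your proposal is correct and follows essentially the same route as the paper: analytic computation of the gradient and Hessian of $J$, then a forward induction (block-Gaussian elimination on the block-tridiagonal past-state system) from $0$ to $t$, with the Schur-complement/Woodbury (push-through) identities revealing the Kalman-gain structure of $\bar P_{k+1}$, $K_{k+1}$, $P_{k+1}$ and the extra cost-gradient term in $\hat{\mu}_{k+1}$. Your stated inductive invariant is exactly \eqref{estimation_backward_pass}, and checking the base case and one elimination step confirms the coefficient matching works out as you describe (with the sign bookkeeping you already flag, since the recursion corresponds to the Newton step $p=-(\nabla^2 J)^{-1}\nabla J$).
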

Interestingly, if all the cost terms $\ell_j$ are zero and if $t=T$, the method is equivalent to a Newton method on the MAP. Furthermore, if the second order derivatives of the measurement function are omitted, then, the algorithm is equivalent to the iterative Kalman Smoother. Finally, the third proposition shows how both past stress and future stress recursions can be coupled to find the update step $p_{x_t}$.
\begin{prop}[Coupling]
In Equation \eqref{Newton_step}, the remaining rows (from $t n_x +1$ to $(t+1) n_x$) are equivalent to
\begin{align}\label{eq:coupling}
    p_{x_{t}} = \left(  P_t^{-1} - \mu    V_t \right) ^{-1} \left( P_t^{-1} \hat{\mu}_t +  \mu  v_t\right).
\end{align}
\end{prop}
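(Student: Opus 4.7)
The plan is to exploit the block-tridiagonal-in-time structure of the Newton system \eqref{Newton_step}: the $n_x$ rows indexed by $x_t$ couple only to the neighbouring unknowns $p_{x_{t-1}}$, $p_{x_t}$, $p_{x_{t+1}}$ and (for $t<T$) $p_{u_t}$. Since Propositions 1 and 2 already express $p_{x_{t+1}}$, $p_{u_t}$ and $p_{x_{t-1}}$ linearly in terms of $p_{x_t}$, the $x_t$-rows reduce to a single $n_x \times n_x$ linear system $M p_{x_t} = b$; it then suffices to identify $M$ with $P_t^{-1} - \mu V_t$ and $b$ with $P_t^{-1}\hat{\mu}_t + \mu v_t$.

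First I would write out these rows explicitly. The variable $x_t$ enters $J$ through $\ell_t(x_t,u_t)$, the measurement penalty in $h_t(x_t)$, the incoming dynamics penalty $-\tfrac{1}{2\mu}(x_t - f_{t-1}(x_{t-1},u_{t-1}))^T Q_t^{-1}(\cdot)$, and the outgoing dynamics penalty $-\tfrac{1}{2\mu}(x_{t+1}-f_t(x_t,u_t))^T Q_{t+1}^{-1}(\cdot)$. Taking first and second derivatives, the augmented-Hessian definitions of $\bar{\ell}^{xx}_t$ and $\bar{\ell}^{xu}_t$ collect the second-order terms of $f_t$ and $h_t$, and the Newton row takes the generic form
\begin{equation*}
\mathcal{A}\, p_{x_{t-1}} + \mathcal{B}\, p_{x_t} + \mathcal{C}\, p_{x_{t+1}} + \mathcal{D}\, p_{u_t} \;=\; \mathcal{R},
\end{equation*}
with coefficients built from $Q_t^{-1}$, $Q_{t+1}^{-1}$, $R_t^{-1}$, $\bar{\ell}^{xx}_t$, $f^x_{t-1}$, $f^x_t$, $f^u_t$ and $h^x_t$. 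I would then substitute $p_{u_t} = G_t p_{x_t} + g_t$ and the expression for $p_{x_{t+1}}$ from \eqref{control_forward_pass}, along with $p_{x_{t-1}}$ from \eqref{estimation_backward_pass}, and regroup terms.

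The only real difficulty is the resulting matrix algebra, which I would organise as a Schur-complement elimination: Propositions 1 and 2 are precisely the backward and forward block-LDU partial factorisations of the time-sparse Hessian, and \eqref{eq:coupling} is the pivot equation that survives once both eliminations have been performed. Concretely, the simplification of $M$ uses two Woodbury identities: first, $\bar{P}_t = Q_t + f^x_{t-1}(P_{t-1}^{-1}-\mu\bar{\ell}^{xx}_{t-1})^{-1} f^{x\,T}_{t-1}$ gives $\bar{P}_t^{-1} = Q_t^{-1} - Q_t^{-1} f^x_{t-1} E_t^{-1} f^{x\,T}_{t-1} Q_t^{-1}$, which absorbs the $E_t^{-1}$ produced by eliminating $p_{x_{t-1}}$; second, $P_t = (I-K_t h^x_t)\bar{P}_t$ gives $P_t^{-1} = \bar{P}_t^{-1} + h^{x\,T}_t R_t^{-1} h^x_t$, which absorbs the measurement curvature. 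On the future side, eliminating $p_{u_t}$ and $p_{x_{t+1}}$ reproduces the Schur complement already computed in \eqref{control_backward_pass}, yielding the $-\mu V_t$ contribution. An entirely parallel but bookkeeping-heavy reduction of the constant term shows that $P_{t-1}^{-1}\hat{\mu}_{t-1}$ combines with the innovation $\gamma_t^i$ through $K_t$ to give $P_t^{-1}\hat{\mu}_t$, while the value-function residuals telescope into $\mu v_t$. The boundary cases $t=0$ (the prior $P_0=P$, $\hat{\mu}_0=\hat{x}_0-x_0^i$ plays the role of the missing past stress) and $t=T$ (the terminal $V_T$, $v_T$ play the role of the missing future stress) then fall out directly from the initial and terminal conditions.
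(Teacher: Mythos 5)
Your proposal is correct and follows essentially the same route as the paper's proof: write the $x_t$-block rows of the Newton system \eqref{Newton_step} and eliminate $p_{x_{t-1}}$, $p_{u_t}$ and $p_{x_{t+1}}$ using the past-stress and future-stress expressions of Propositions 1 and 2, which is exactly the paper's combination of the analytical gradient/Hessian of $J$ with the forward induction up to $t$ and the backward induction down to $t$. The key simplifications you identify are the right ones — the Woodbury identities $\bar{P}_t^{-1} = Q_t^{-1} - Q_t^{-1} f^x_{t-1} E_t^{-1} f^{x\,T}_{t-1} Q_t^{-1}$ and $P_t^{-1} = \bar{P}_t^{-1} + h^{x\,T}_t R_t^{-1} h^x_t$ collapsing the past side to $-\tfrac{1}{\mu}P_t^{-1}$, and the $\Gamma_{t+1}^{-1}V_{t+1}$ Schur complement collapsing the future side to $V_t$ — so the coefficient of $p_{x_t}$ is $-\tfrac{1}{\mu}\left(P_t^{-1} - \mu V_t\right)$ and the constant terms assemble into $\tfrac{1}{\mu}\left(P_t^{-1}\hat{\mu}_t + \mu v_t\right)$ as claimed.
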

In the limit case when $\mu$ tends to zero, the estimation and control are decoupled and we recover the usual certainty equivalence principle: $p_{x_t} = \hat{\mu}_t$. The algorithm is then equivalent to an iterative estimator and an iterative controller running independently. More precisely, at each iteration the controller uses the current estimate of the smoother.
\begin{proof}
The proof follows from the analytical derivations of the gradient and the Hessian of \eqref{eq:unconstrained_main_problem}, a forward induction from $0$~to~$t$ and a backward induction from $T$~to~$t$. The complete proof is provided in the supplementary material~\cite{appendix}.
\end{proof}

\setlength{\textfloatsep}{0pt}
\begin{algorithm}[!ht]
\footnotesize
\DontPrintSemicolon
\KwInput{$x_0^i,  x_1^i \dots, x_T^i, u_t^i \dots, u_{T-1}^i$}
\tcp{Estimation forward pass}
 $P_0 \gets P$, $ \hat{\mu}_0 \gets   \hat{x}_0 - x_0^i  $\;
\For{$k = 0, ... t-1$}
{
$P_{k+1}, \hat{\mu}_{k+1} \gets $  Eq. \eqref{estimation_forward_pass} \;      
}
\tcp{Control backward pass}
$V_T \gets \ell^{xx}_T $, $v_T \gets \ell^{x}_T $\;
\For{$k = T-1, ... t$}{
  $  V_k, v_k \gets $ Eq. \eqref{control_backward_pass} \;
}
\tcp{Estimation and control coupling}
$  p_{x_t} \gets \left(  P_t^{-1} - \mu    V_t \right) ^{-1} \left( P_t^{-1} \hat{\mu}_t +  \mu  v_t\right)$\;
\tcp{Estimation backward pass}
\For{$k = t-1, ... 0$}{
$p_{x_{k}}\gets   $ Eq. \eqref{estimation_backward_pass} \;
}
\tcp{Control forward pass}
\For{$k=t, \dots T-1$}{
$p_{u_k}, p_{x_{k+1}} \gets  $ Eq. \eqref{control_forward_pass}\;
}
\KwOutput{$p_{x_0},  p_{x_1} \dots, p_{x_T}, p_{u_t} \dots, p_{u_{T-1}}$}
\caption{Stagewise Newton step}\label{alg:newton_step}
\end{algorithm}

In the end, the update step, $p$, can be computed with a forward recursion on the past indexes, a backward recursion on the future indexes, a coupling equation, a backward recursion on the past indexes and a forward recursion on the future indexes. Algorithm \ref{alg:newton_step} summarizes those steps. Clearly, the complexity is linear in time. Instead of inverting a matrix of size ${((T+1) n_x+(T-t)n_u)}$, Algorithm \ref{alg:newton_step} only operates with matrices of size $n_x$ or $n_u$ and the number of operation is proportional to $T$.

\subsection{Line-search and convergence}
In the linear quadratic case, Algorithm \ref{alg:newton_step} is equivalent to Whittle's derivations and only one iteration is required to find a solution. However, in the general nonlinear case, several iterations of Newton's step are required. A common approach to guarantee the convergence of the overall iterative procedure is to introduce a line search and a merit function~\cite{nocedal1999numerical}. Given a guess at iteration $i$ and a direction~$p$, the next guess is defined by
\begin{align} \label{updaterule}
 \begin{pmatrix}
   x^{i+1}_{0:T} \\
     u^{i+1}_{t:T-1}
    \end{pmatrix}=
   \begin{pmatrix}
   x^{i}_{0:T} \\
     u^{i}_{t:T-1}
    \end{pmatrix}  + \alpha_i
    \begin{pmatrix}
     p_{x_{0:T}}\\
     p_{u_{t:T-1}}
    \end{pmatrix},
\end{align}
where the step length $\alpha_i$ is chosen in order to decrease the merit function. As advocated by Nocedal et al. \cite{nocedal1999numerical}, a Newton step provides a descent direction for the merit function
\begin{equation}\label{eq:merit}
f_{\mathcal{M}}(x_{0:T}, u_{t:T-1}) =  \frac{1}{2} \sum_{j=0}^T  \left\Vert \frac{ \partial J}{ \partial x_j}  \right\Vert^2  +  \frac{1}{2} \sum_{j=t}^{T-1}  \left\Vert \frac{ \partial J}{ \partial u_j}  \right\Vert^2 .
\end{equation}
This  merit function can be derived analytically. The exact derivations of each gradient are provided in the supplementary material~\cite{appendix}. By construction, the expected decrease of the direction $p$ derived by Algorithm \ref{alg:newton_step} is $ p^T \nabla f_{\mathcal{M}} =  - \vert \vert \nabla J \vert\vert_2^2 $ and the following convergence guarantees hold.
\begin{prop}
Assuming that the norm of the inverse of the Hessian of $J$ is bounded and that the step length $\alpha_i$ satisfies the Wolfe conditions for the merit function \eqref{eq:merit}, the sequence $(x^{i}_{0:T}, u^{i}_{t:T-1})_i$ defined by the update rule \eqref{updaterule} with steps from Algorithm \ref{alg:newton_step} is globally convergent to a stationary point of \eqref{eq:unconstrained_main_problem}. Furthermore, when the iterate is sufficiently close to the solution, the sequence has quadratic convergence.
\end{prop}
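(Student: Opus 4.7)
The plan is to recast the search for a saddle point of $J$ as finding a zero of $F(z) \coloneqq \nabla J(z)$ with $z = (x_{0:T}, u_{t:T-1})$, so that $f_{\mathcal{M}}(z) = \tfrac{1}{2}\|F(z)\|^2$ and, since $H(z) \coloneqq \nabla^2 J(z)$ is symmetric, $\nabla f_{\mathcal{M}}(z) = H(z)\,F(z)$. The direction $p_i = -H(z^i)^{-1} F(z^i)$ returned by Algorithm \ref{alg:newton_step} immediately yields $p_i^T \nabla f_{\mathcal{M}}(z^i) = -\|F(z^i)\|^2$, as already stated in the paper, confirming that $p_i$ is a strict descent direction for $f_{\mathcal{M}}$ whenever $F(z^i) \neq 0$. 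The proof then splits into a global and a local stage.

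For global convergence I would apply Zoutendijk's theorem (\cite[Thm.\ 3.2]{nocedal1999numerical}) to the iterates of \eqref{updaterule}. Besides the descent property, Zoutendijk requires $f_{\mathcal{M}}$ bounded below (immediate from $f_{\mathcal{M}} \geq 0$), Lipschitz continuity of $\nabla f_{\mathcal{M}}$ on the initial sublevel set (inherited from the smoothness hypothesis on the cost), and a uniform lower bound on the angle $\cos\theta_i \coloneqq -p_i^T \nabla f_{\mathcal{M}}(z^i)/(\|p_i\|\,\|\nabla f_{\mathcal{M}}(z^i)\|)$. Combining $\|p_i\| \leq \|H(z^i)^{-1}\|\,\|F(z^i)\|$ with $\|\nabla f_{\mathcal{M}}(z^i)\| \leq \|H(z^i)\|\,\|F(z^i)\|$ gives $\cos\theta_i \geq 1/\kappa(H(z^i))$, and the standing bounded-inverse-Hessian assumption together with local boundedness of $\|H\|$ on the sublevel set produces $\cos\theta_i \geq c > 0$ uniformly in $i$. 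Zoutendijk then forces $\nabla f_{\mathcal{M}}(z^i) \to 0$, and premultiplying by the bounded operator $H(z^i)^{-1}$ yields $F(z^i) = \nabla J(z^i) \to 0$, which is global convergence to a stationary point.

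For the quadratic rate, I would show that once $z^i$ enters a sufficiently small neighbourhood of a non-degenerate stationary point $z^\star$ the unit step $\alpha_i = 1$ satisfies both Wolfe conditions, reducing the recursion to pure Newton iteration for $F = 0$; the latter has $\|z^{i+1} - z^\star\| = O(\|z^i - z^\star\|^2)$ by the Newton--Kantorovich theorem under the standing smoothness and invertibility hypotheses. Acceptance of $\alpha_i = 1$ follows from $\nabla^2 f_{\mathcal{M}}(z^\star) = H(z^\star)^2 \succ 0$, which gives $f_{\mathcal{M}}(z^i) = \Theta(\|z^i - z^\star\|^2)$, together with the quadratic Newton contraction $f_{\mathcal{M}}(z^i + p_i) = O(\|z^i - z^\star\|^4) = o(f_{\mathcal{M}}(z^i))$; these estimates dominate both the Armijo bound $(1 - 2c_1) f_{\mathcal{M}}(z^i)$ and the curvature condition for all $i$ sufficiently large.

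The main obstacle is precisely this last step: because $f_{\mathcal{M}}$ is not the function whose critical points the Newton direction directly tracks, the standard unit-step theorem \cite[Thm.\ 3.5]{nocedal1999numerical} does not apply verbatim, and one has to carry out a careful third-order expansion of $f_{\mathcal{M}}$ along $p_i$, which strictly demands a locally Lipschitz Hessian rather than only $C^2$ smoothness of the cost. Apart from this point the argument is a direct transcription of the standard Newton line-search template.
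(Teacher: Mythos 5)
Your proposal is correct and follows essentially the same route as the paper, whose proof simply observes that Algorithm~\ref{alg:newton_step} produces an exact Newton step on $\nabla J = 0$ and defers to the standard line-search Newton convergence theory of \cite{nocedal1999numerical}; your Zoutendijk argument for the merit function $\tfrac{1}{2}\Vert \nabla J \Vert^2$ together with eventual acceptance of the unit step is precisely the content of that citation, written out in detail. The caveats you flag (a locally Lipschitz Hessian beyond the stated $\mathcal{C}^2$ regularity, a uniform bound on $\Vert \nabla^2 J \Vert$ along the iterates for the angle bound, and the convention that the line search tries $\alpha_i = 1$ first) are exactly what the paper's blanket assumption of ``smoothness and non-degeneracy conditions required for the convergence of Newton's method'' is intended to absorb.
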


\begin{proof}This proposition is a direct consequence from the fact that Algorithm \ref{alg:newton_step} yields a Newton step. A detailed proof of the convergence guarantees of Newton method is provided in \cite{nocedal1999numerical}. 
\end{proof}

One may ask under which conditions the Hessian of $J$ is non-degenerate. Intuitively, large values of $\mu$ can make the problem ill-defined. Indeed, if the opposing player can choose large disturbances, then the controller might not be able to compensate. 
In the linear quadratic case, Whittle \cite{whittle1981risk} studied this maximum value for $\mu$ that makes the problem ill-defined. Although, we do not study this limit value in the nonlinear case, we note that analogously to the linear quadratic case, the algorithm is well defined if $\Gamma_{k}$ and ${P_k^{-1} - \mu \bar \ell_k^{xx}}$ are positive-definite which is the case when $\mu$ is small enough.
\subsection{About the cooperative case}

So far, only the case, $\mu > 0 $ has been considered, but, the case $\mu<0$ is also well defined. 
Indeed, the search for a stationary point of~$J$ can be done for an arbitrary sign of~$\mu$.
However, such a stationary point would now be a way to find a local minimum of $J$ with respect to the variables ${(x_{0:T}, u_{t:T-1})}$.
Interestingly, this scenario  can be interpreted as a cooperative scenario between the controller and the opposing player. In fact, the disturbances can be seen a second controller minimizing the cost, $\ell$, and maximizing the likelihood of the disturbances. Clearly, this change of sign does not affect the derivation of the stagewise Newton method. However, it can be noted that in that case, one can directly use the cost $J$ as a merit function.

\section{Experiments}
A Python implementation of the proposed method is available online \cite{appendix}. It is based on the Crocoddyl software~\cite{mastalli20crocoddyl}, a state of the art (risk-neutral) DDP solver which provides analytical derivatives of robot dynamics.
In this section, two numerical examples illustrate the proposed method.

\subsection{Planar quadrotor}

We study a quadrotor moving in a plane aiming to reach the position $(p_x\ \  p_y ) =(2\ \  0)$ starting at the origin without initial velocity. The state is ${x =(p_x \ \  p_y \ \ \theta \ \ \dot p_x \ \ \dot p_y \ \ \dot \theta )^T}$ where $\theta$ is the orientation of the quadrotor. The system dynamics is 
\begin{align}
m \ddot p_x &= - ( u_1 + u_2) \sin(\theta) ,\nonumber\\
m \ddot p_y &=  ( u_1 + u_2) \cos(\theta) - m g, \nonumber\\
J \ddot \theta &= r ( u_1 - u_2),
\end{align}

where the control input $u = (u_1 \ \ u_2 )^T \in \mathbb{R}^2$ represents the force at each rotor and $g$ is gravitational acceleration. An exponential cost models the presence of an obstacle 
\begin{align}
    \ell_k(x_k, u_k) &=  0.3 \exp \left(-10 ({p_x}_k - 1)^2 - 0.5 ({p_y}_k + 0.1)^2\right)\nonumber\\
    &+ 0.005 \Vert u_k - \bar u \Vert_2^2 + 0.05 (x_k - x_{\star})^T L (x_k - x_{\star}) \nonumber \\
    \ell_T(x_T) &= (x_T - x_{\star})^T L (x_T - x_{\star}),
\end{align}

\noindent where  $\bar u = \frac{m g}{2} ( 1 \ \ 1 )^T $, $x_{\star} = ( 2 \ \ 0 \ \ 0 \ \ 0 \ \ 0 \ \ 0 )^T$, and $L = \operatorname{diag}( 100 \ \ 100 \ \ 100 \ \ 1 \ \ 1 \ \ 1 )$. Only the position, $p_x$, $p_y$, and orientation~$\theta$ are part of the measurements. The integration and discretization of the model is done with Runge Kutta 4 with a time step of~$0.05$ and the total horizon, $T$, is $60$.
Furthermore, ${P_0 = Q = 10^{-5}  I_6}$, ${R = 10^{-4} \operatorname{diag} (1\ \ 1 \ \ 0.01 )}$ and $\hat x_0$ is the origin. A backtracking line-search is used -- a step is accepted if
$  {f_{\mathcal{M}}^{i+1}\leq     f_{\mathcal{M}}^{i}+ \alpha_i c   p_i^T f_{\mathcal{M}}^i}$
where ${f_{\mathcal{M}}^{i} = f_{\mathcal{M}}(x^{i}_{0:T}, u^{i}_{t:T-1}) }$ with ${c = \frac{1}{4}}$. The iterative process is stopped when the decrease in the merit function is lower than~$10^{-12}$. 

Figure~\ref{fig:quadrotor_sensitivity} illustrates the solution obtained by the solver for different values of $\mu$. The neutral
controller, the limit when $\mu$ tends to zero, aiming at minimizing the cost without accounting for disturbances, is solved using DDP. Interestingly, when $\mu$ is positive (the non-cooperative case), the opposing player chooses disturbances that will push the quadrotor towards to the obstacle and when $\mu$ is negative (the cooperative case), the opposing player chooses disturbances that will push the quadrotor away form the obstacle.
For this experiment, we found that the value of $\mu$ for which $\Gamma_k$ and ${P_k^{-1} - \mu \Bar \ell^{xx}_k}$ are no longer positive-definite matrices was around~$20$.
\begin{figure}[t]
    \centering
    \includegraphics[width=.9\linewidth]{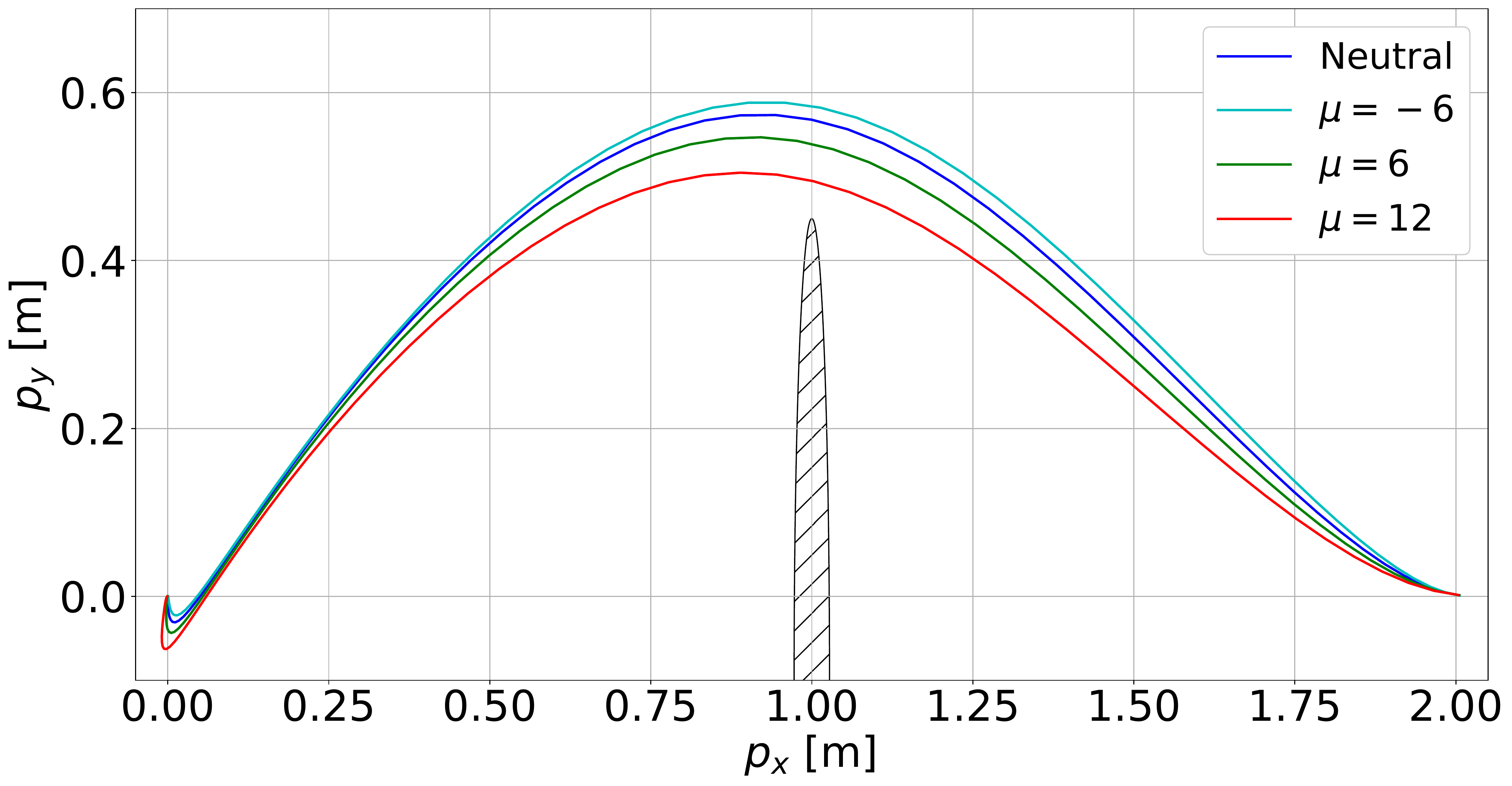}
    \caption{Initial plan for different values of $\mu$. The larger $\mu$ the more the controller plans to be pushed against the obstacle.}
    \label{fig:quadrotor_sensitivity}
\end{figure}

\begin{figure}[t]
    \centering
    \includegraphics[width=.9\linewidth]{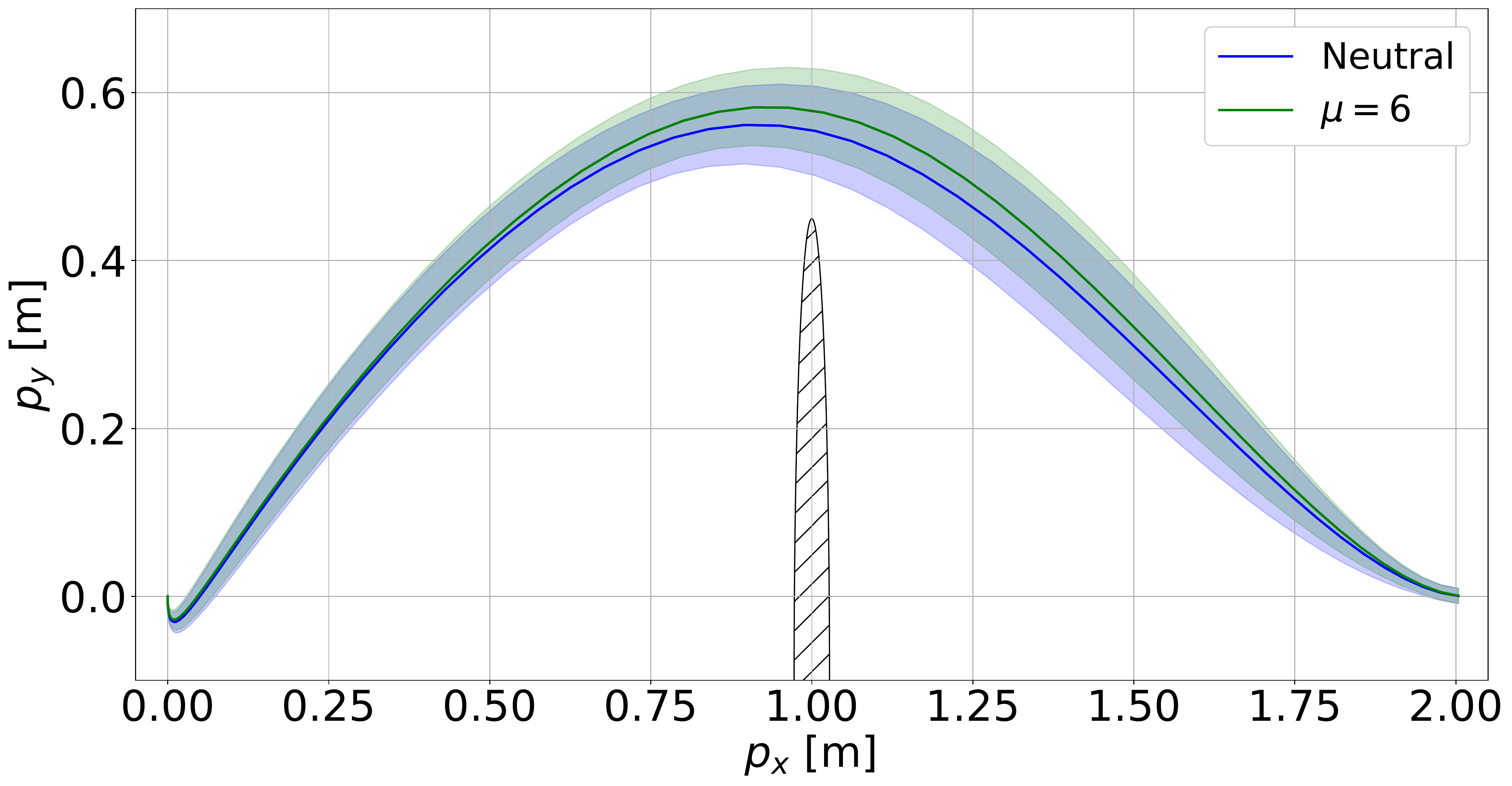}
    \caption{Average trajectory. Compared the neutral controller, the dynamic game controller ($\mu=6$) exhibits a risk sensitive behavior as it remains further from the high cost area representing the obstacle. }
    \label{fig:quad_mpc_traj}
\end{figure}

\begin{figure}[t]
    \centering
    \includegraphics[width=.8\linewidth]{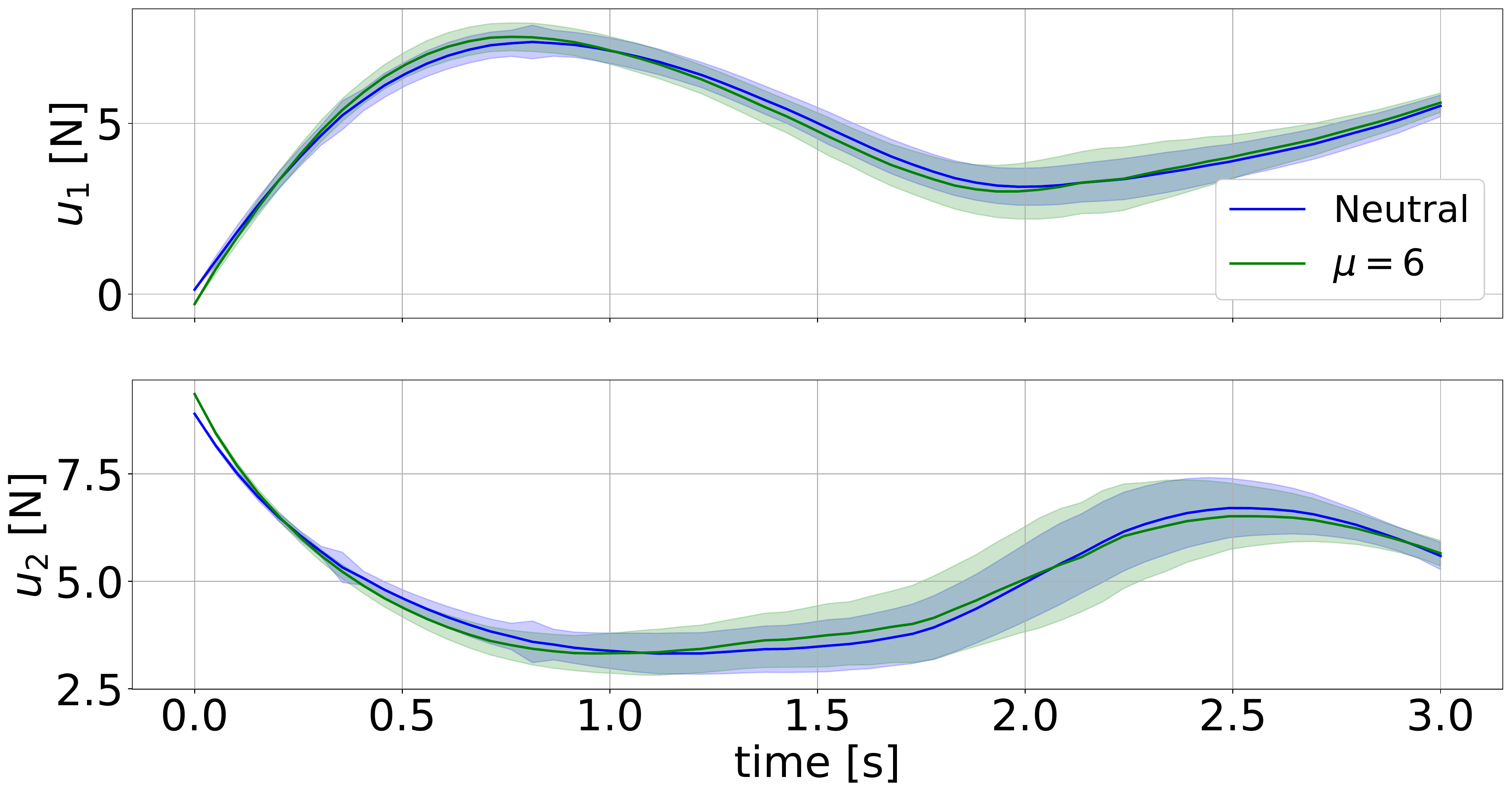} 
    \caption{Average control trajectories. Compared the neutral controller, the dynamic game controller ($\mu=6$) has a larger standard deviation.}
    \label{fig:quad_mpc_control}
\end{figure}

Next, we illustrate the risk-sensitive behavior of the resulting controller in closed loop in a simulation with disturbances following Gaussian distributions: ${x_0 \sim \mathcal{N}(\hat x_0, P_0)}$, ${w_k \sim \mathcal{N}(0, Q)}$ and ${\gamma_k \sim \mathcal{N}(0, R)}$.  We set $\mu=6$  and at each time step of the simulation, Algorithm~\ref{alg:newton_step} is run and $u_t$ is applied to the system.  Additionally, we compare to the neutral controller -- an iterative Kalman smoother is used for the filtering and the controller uses DDP with the last state estimate from the smoother as an initial condition.
Figure~\ref{fig:quad_mpc_traj} depicts the average and standard deviation over one thousand simulation. We can see that, in this MPC scheme, the dynamic game controller maintains a larger distance from the obstacle, resulting in safer behavior.
Figure~\ref{fig:quad_mpc_control} shows the distribution of control trajectories. Interestingly, the dynamic game controller has a larger standard deviation.

\subsection{An industrial robot}

In this example, we consider the 7-DoF torque-controlled KUKA LWR iiwa R820 14. The dynamics of the robot are provided by Pinocchio \cite{pinocchioweb}. The 14-dimensional state is composed of the joint positions and  velocities. We consider the following prior on the initial condition \mbox{$\hat x_0 = \begin{pmatrix} 0.1& 0.7& 0.& 0.7& -0.5& 1.5& 0.\end{pmatrix}^T$}.
The control input is a 7-dimensional vector of the torque applied on each joint. The goal is to move the end effector to a desired position, $ p_{\text{target}} = \begin{pmatrix} -0.4 & 0.3 & 0.7 \end{pmatrix}$ with the following cost:
\begin{align}
    \ell_k(x_k, u_k) &= 10^{-3}  \Vert x_k - \hat x_0 \Vert_2^2 + 10^{-6} \Vert u_k - \bar u(x_k) \Vert_2^2  \nonumber\\
    &+ 10^{-1} \Vert p_{\text{target}} - \bar p (x_k) \Vert_2^2  \nonumber \\
    \ell_T(x_T) &=\Vert p_{\text{target}} - \bar p (x_k) \Vert_2^2 + 10^{-3}  \Vert x_k - \hat x_0 \Vert_2^2 ,
\end{align}
where $\bar u(x_k)$ is the gravity compensation and $\bar p (x_k)$ the position of the end-effector.
For the measurement model, we assume that only the joints position are measured.
The initial control inputs $u_0, \dots u_{t-1}$ are generated with DDP. Given those $t$ initial control inputs, $t$ measurements are generated according to an undisturbed trajectory. More precisely, for $1\leq k \leq t$, the observations are defined by ${y_k = h_k(f^{(k)}(\hat x_0, u_{1:k-1}))}$ where $f^{(k)}(\hat x_0, u_{1:k-1})$ denotes the state value at the $k^{th}$ step integrated from the initial guess $\hat x_0$.
The horizon, $T$, is $100$ and $t$ is $5$ while the sensitivity parameter is set to $\mu= 1.2$. 
Here  $P_0 = Q = 0.01 \times I_{14}$ and $R = 0.5 \times I_{7}$. For this experiment, the second order derivatives of the dynamics were approximated to be null as the former are not provided by most standard rigid body dynamics libraries such as Pinocchio~\cite{pinocchioweb}. Note that this is a common practice for state of the art optimal control algorithms in robotics~\cite{mastalli20crocoddyl}. Our solver converges nevertheless, suggesting that this approximation might be used to scale to a large number degrees of freedom for real-time computations (e.g. MPC). 
In Figure~\ref{fig:kuka}, we plot the solution of the dynamics game solver compared to DDP in the end effector space, the shaded grey area represents the estimation part of the solution. We can see that the dynamic game controller plans that the disturbances will slow down the reaching task.

\begin{figure}[h]
    \centering
    \includegraphics[width=0.78\linewidth]{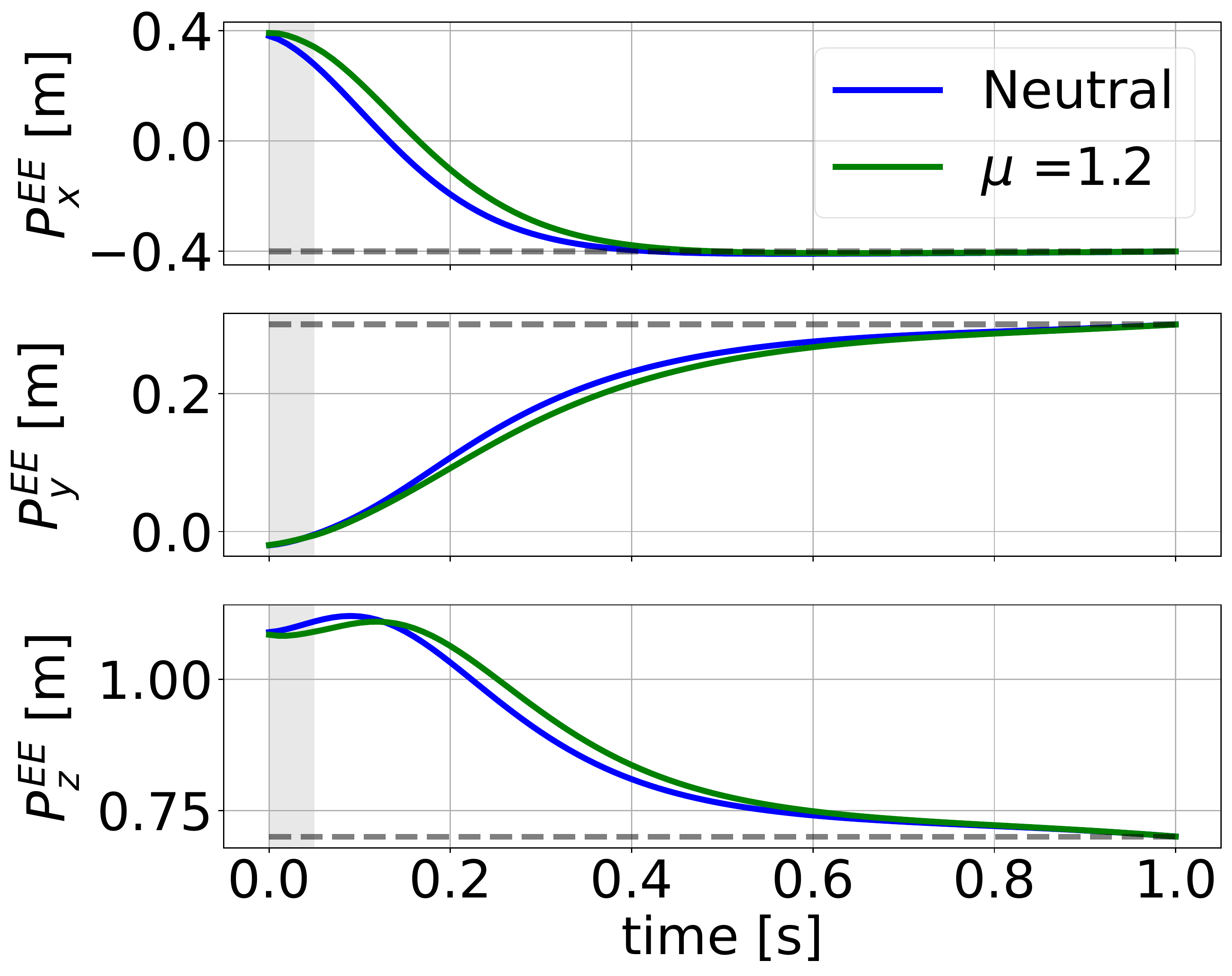}
    \caption{End-effector position $p^{EE}$ vs time. The dashed lines represent the target. }
    \label{fig:kuka}
\end{figure}

\section{Conclusion}
We introduced an iterative solver to find local Nash equilibrium of dynamic game with imperfect state measurements. The proposed algorithm is proven to be equivalent to Newton's method and benefits from its convergence properties while scaling linearly with the time horizon.


\bibliographystyle{IEEEtran}
\footnotesize
\bibliography{references}
\end{document}